\newtheorem{theorem}{Theorem}[section]
\theoremstyle{definition}
\newtheorem{definition}{Definition}[section]
\newtheorem{corollary}{Corollary}[section]
\theoremstyle{remark}
\numberwithin{equation}{section}
\begin{document}
\title[Pseudo parallel contact CR-Submanifolds of Kenmotsu manifolds]{Pseudo parallel contact CR-Submanifolds of Kenmotsu manifolds}
\author[S. K. Hui and P. Mandal]{Shyamal Kumar Hui and Pradip Mandal}
\subjclass[2000]{53C15, 53C42.}
\keywords{Kenmotsu manifold, CR-submanifold, pseudo parallel submanifold.}
\maketitle
\begin{abstract}
The present paper deals with the study of pseudo parallel (in the sense of Chaki and in the sense of Deszcz) contact CR-submanifolds
with respect to Levi-Civita connection as well as semisymmetric metric connection of
Kenmotsu manifolds and prove that these corresponding two classes are equivalent with a certain condition.
\end{abstract}
\section{Introduction}
In \cite{TANNO} Tanno classified connected almost contact metric manifolds whose automorphism groups possess the maximum dimension.
For such a manifold, the sectional curvature of plane sections containing $\xi$ is a constant, say $c$. He proved that they could
be divided into three classes:
(i) homogeneous normal contact Riemannian manifolds with $c > 0$,
(ii) global Riemannian products of a line or a circle with a K\"{a}hler manifold of constant holomorphic sectional curvature
          if $c=0$ and
(iii) a warped product space $\mathbb{R} \times _f \mathbb{C}^n$ if $c< 0$.
It is known that the manifolds of class (i) are characterized by admitting a Sasakian structure.
The manifolds of class (ii) are characterized by a tensorial relation admitting a cosymplectic structure. Kenmotsu \cite{KEN}
characterized the differential geometric properties of the manifolds of class (iii) which are nowadays called Kenmotsu
manifolds and later studied by several authors (\cite{HUI1}-\cite{HUI3}) etc.\\
\indent The contact CR-submanifolds are rich and very much interesting subject. The study of the differential geometry of a contact
CR-submanifolds as a generalization of invariant and anti-invariant submanifolds of an almost contact metric manifold
was initiated by Bejancu \cite{BEJ}. Thereafter several authors studied submanifolds as well as contact CR-submanifolds
of different classes of almost contact metric manifolds such as Chen (\cite{CHEN1}, \cite{CHEN2}), Hasegawa and Mihai \cite{HASE},
Hui et al. \cite{HAN}, Jamali and Shahid \cite{JAMALI}, Khan et al. (\cite{KHAN}, \cite{MAKHAN}), Munteanu \cite{MUN}, Murathan et al. \cite{MURA}
and many others. Also Atceken and his co-author (\cite{ATCE1}, \cite{ATCE2}) studied contact CR-submanifolds of Kenmotsu manifolds.\\
\indent A Riemannian manifold is said to be Ricci symmetric or Ricci parallel if its Ricci tensor $S$ of type $(0,2)$
satisfies $\nabla S=0$, where $\nabla$ denotes the Riemannian connection. During the last six decades, the notion of Ricci
symmetry has been weakened by many authors in several ways to a different
extent such as Ricci recurrent manifolds by Patterson \cite{PATT}, pseudo Ricci symmetric manifolds or pseudo Ricci parallel manifolds by  Chaki \cite{CHAKI}.\\
\indent A non-flat Riemannian manifold $(M,g)$ is said to be pseudo Ricci symmetric \cite{CHAKI} if its Ricci tensor $S$ of type
$(0,2)$ is not identically zero and satisfies the condition
\begin{equation}\label{eqn1.1}
  (\nabla_X S)(Y,Z)=2\alpha(X)S(Y,Z)+\alpha(Y)S(X,Z)+\alpha(Z)S(X,Y)
\end{equation}
for all vector fields $X,Y,Z\in \chi(M)$, where $\alpha$ is a nowhere vanishing $1$-form.
\indent Again in another direction, Szab\'{o} \cite{SZA} generalized the notion of Ricci symmetric manifolds to Ricci semisymmetric manifolds,
 which also generalized by Deszcz \cite{DES}
as Ricci pseudosymmetric manifolds.\\
\indent A Riemannian manifold $(M^n,g)~(n>2)$ is said to be Ricci pseudosymmetric \cite{DES} if and only if
\begin{equation}\label{eqn1.2}
  (R(X,Y)\cdot S)(Z,U)=L_SQ(g,S)(Z,U;X,Y)
\end{equation}
holds on $U_S=\{x\in M:(S-\frac{r}{n}g)_x\neq0\}$ for all $X,Y,Z,U\in \chi(M)$, where $L_S$ is some function on $U_S$, $R$
is the curvature tensor, $S$ is the Ricci tensor and $r$ is the scalar curvature of the manifold $M$.
Here the tensor $Q(g,S)$ is defined as
\begin{eqnarray}
\label{eqn1.3}
  Q(g,S)(Z,U;X,Y) &=& -((X\wedge_gY)\cdot S)(Z,U) \\
  \nonumber&=&         S((X\wedge_gY)Z,U)+S(Z,(X\wedge_gY)U),
\end{eqnarray}
where $(X\wedge_gY)Z=g(Y,Z)X-g(X,Z)Y.$\\
\indent It may be noted that pseudo Ricci symmetric manifolds by Chaki is different from Ricci pseudosymmetric manifolds by Deszcz.\\
\indent The present paper deals with the study of pseudo parallel contact CR-subman-ifolds of Kenmotsu manifolds.
The paper is organized as follows. Section $2$ is concerned with preliminaries. Section $3$ is devoted to the study of pseudo
parallel (in the sense of Chaki) contact CR-submanifolds as well  as pseudo parallel (in the sense of Deszcz) contact CR-submanifolds
 of Kenmotsu manifolds. It is shown that pseudo parallel (in the sense of Deszcz) contact CR-submanifolds and
pseudo parallel (in the sense of Chaki) contact CR-submanifolds of Kenmotsu manifolds are equivalent with a certain condition.
The pseudo parallel (in the sense of Chaki) contact CR-submanifolds with respect to semisymmetric metric connection as well as
pseudo parallel (in the sense of Deszcs) contact CR-submanifolds with respect to semisymmetric metric connection of Kenmotsu
manifolds with respect to semisymmetric metric connection are studied in section $4$ and it is proved that these two classes
are also equivalent with a certain condition.
\section{Preliminaries}
An odd dimensional smooth manifold $(\overline{M}^{2n+1},g)$ is said to be an almost contact metric manifold \cite{BLAIR} if it admits a $(1,1)$
tensor field $\phi$, a vector field $\xi$, an $1$-form $\eta$ and a Riemannian metric $g$ which satisfy
\begin{equation}\label{eqn2.1}
  \phi \xi=0,\ \ \ \eta(\phi X)=0, \ \ \ \phi^2 X=-X+\eta(X)\xi,
\end{equation}
\begin{equation}\label{eqn2.2}
  g(\phi X,Y)=-g(X,\phi Y), \ \ \ \eta(X)=g(X,\xi), \ \ \ \eta(\xi)=1,
\end{equation}
\begin{equation}\label{eqn2.3}
  g(\phi X,\phi Y)=g(X,Y)-\eta(X)\eta(Y)
\end{equation}
for all vector fields $X,Y$ on $M$.\\
\indent An almost contact metric manifold $\overline{M}^{2n+1}(\phi, \xi, \eta, g)$ is said to be Kenmotsu manifold if the following conditions hold \cite{KEN}:
\begin{equation}\label{eqn2.4}
  \overline{\nabla}_X \xi=X-\eta(X)\xi,
\end{equation}
\begin{equation}\label{eqn2.5}
 (\overline{\nabla}_X \phi)(Y)=g(\phi X ,Y)\xi-\eta(Y)\phi X,
\end{equation}
where $\overline{\nabla}$ denotes the Riemannian connection of $g$.\\
\indent In a Kenmotsu manifold, the following relations hold \cite{KEN}:
\begin{equation}\label{eqn2.6}
 (\overline{\nabla}_X \eta)(Y)=g(X ,Y)-\eta(X)\eta(Y),
\end{equation}
\begin{equation}\label{eqn2.7}
  \overline{R}(X,Y)\xi=\eta(X)Y-\eta(Y)X,
\end{equation}
\begin{equation}\label{eqn2.8}
  \overline{R}(\xi,X)Y=\eta(Y)X-g(X,Y)\xi,
\end{equation}
\begin{equation}\label{eqn2.9}
  \overline{S}(X,\xi)=-2n\eta(X)
\end{equation}
for any vector field $X,Y$ on $\overline{M}$ and $\overline{R}$ is the Riemannian curvature tensor and $\overline{S}$ is the Ricci tensor of type
$(0,2)$.\\
\indent Let $M$ be a $(2m+1)$-dimensional $(m<n)$ submanifold of a Kenmotsu manifold $\overline{M}$. Throughout the paper we assume that
the submanifold $M$ of $\overline{M}$ is tangent to the structure vector field $\xi$.\\
\indent Let $\nabla$ and $\nabla ^\bot$ are the induced connections on the tangent bundle $TM$ and the normal bundle $T^\bot M$ of $M$ respectively.
Then the Gauss and Weingarten formulae are given by
\begin{equation}\label{eqn2.10}
  \overline{\nabla}_X Y=\nabla_X Y+h(X,Y)
\end{equation}
and
\begin{equation}\label{eqn2.11}
  \overline{\nabla}_XV=-A_V X+\nabla _X^ {\bot}V
\end{equation}
for all $X,Y\in \Gamma(TM)$ and $V\in \Gamma(T^\bot M)$, where $h$ and $A_V$ are second fundamental form and the shape operator
(corresponding to the normal vector field $V$) respectively for the immersion of $M$ into $\overline{M}$. The second fundamental form
$h$ and the shape operator $A_V$ are related by
\begin{equation}\label{eqn2.12}
  g(h(X,Y),V)=g(A_V X,Y)
\end{equation}
for any $X,Y\in \Gamma(TM)$ and $V\in \Gamma(T^\bot M)$, where g is the Riemannian metric on $\overline{M}$ as well as on $M$.\\
\indent For any submanifold $M$ of a Riemannian manifold $\overline{M}$, the equation of Gauss is given by
\begin{eqnarray}
\label{eqn2.13}
  \overline{R}(X,Y)Z &=& R(X,Y)Z+A_{h(X,Z)}Y-A_{h(Y,Z)}X\\
  \nonumber&+&(\overline{\nabla}_X h)(Y,Z)-(\overline{\nabla}_Y h)(X,Z)
\end{eqnarray}
for any $X,Y,Z\in \Gamma(TM)$, where $\overline{R}$ and $R$ denote the the Riemannian curvature tensors of $\overline{M} $ and $M$ respectively.
The covariant derivative $\overline{\nabla}h$ of $h$ is defined by
\begin{equation}\label{eqn2.14}
  (\overline{\nabla}_X h)(Y,Z)=\nabla_X^\bot h(Y,Z)-h(\nabla_X Y,Z)-h(\nabla_XZ,Y).
\end{equation}
The normal part $(\overline{R}(X,Y)Z)^\bot$ of $\overline{R}(X,Y)Z$ from $(\ref{eqn2.13}) $ is given by
\begin{equation}\label{eqn2.15}
  (\overline{R}(X,Y)Z)^\bot=(\overline{\nabla}_Xh)(Y,Z)-(\overline{\nabla}_Yh)(X,Z),
\end{equation}
which is known as Codazzi equation. In particular, if $(\overline{R}(X,Y)Z)^\bot=0$ then $M$ is said to be curvature-invariant submanifold of
$\overline{M}$.\\
\indent On the other hand, since $M$ is tangent to $\xi $, we have
\begin{equation}\label{eqn2.16}
  A_V \xi=h(X,\xi)=0.
\end{equation}
In view of (\ref{eqn2.16}) we have from (\ref{eqn2.4}) and (\ref{eqn2.10}) that
\begin{equation}\label{eqn2.17}
  \nabla_X \xi =\overline{\nabla}_X \xi=X-\eta(X)\xi.
\end{equation}
Also in view of (\ref{eqn2.16}) we have from (\ref{eqn2.7}) and (\ref{eqn2.13}) that
\begin{equation}\label{eqn2.18}
  R(X,Y)\xi=\overline{R}(X,Y)\xi=\eta(X)Y-\eta(Y)X.
\end{equation}
\indent Let $M$ be an isometrically immersed in an almost contact metric manifold $\overline{M}$ then for every $p\in M$,
there exist a maximal invariant subspace denoted by $D_p$ of the tangent space $T_pM$ of $M$. If the dimension of $D_p$ is the same
for all value of $p\in M$, then $D_p$ gives an invariant distribution $D$ on $M$.
\begin{definition}\cite{ATCE2}
  Let $M$ be an isometrically immersed submanifold of a Kenmotsu manifold $\overline{M}$. Then $M$ is called a contact
CR-submanifold of $\overline{M}$ if there is a differential distribution $D:p\rightarrow D_p\subseteq T_p M$ on $M$ satisfying the
  following conditions:\\
(i) $\xi \in D$,\\
(ii) D is invariant with respect to $\phi$, i.e., $\phi(D_p)\subseteq D_p$ for each $p\in M$ and\\
(iii) the orthogonal complementary distribution $D^\bot:p\rightarrow D_p^\bot{\subseteq T_pM}$ satisfying $\phi(D^\bot _p)\subseteq T_p^\bot M$
for each $p\in M$.
\end{definition}
\indent A contact CR-submanifold is called anti-invariant (or totally real) if $D_p=\{0\}$ and invariant (or holomorphic )
if $D_p^\bot=\{0\}$, respectively for any $p\in M$. It is called proper contact CR-submanifold if neither $D_p=\{0\}$ nor $D_p^\bot=\{0\}$.\\
\indent In \cite{FRIED} Friedmann and Schouten introduced the notion
of semisymmetric linear connection on a differentiable manifold. Then in 1932 Hayden \cite{HAYDEN}
introduced the idea of metric connection with torsion on a
Riemannian manifold. A systematic study of the semisymmetric metric connection on
a Riemannian manifold has been given by Yano in 1970 \cite{YANO5}.\\
\indent A linear connection of a Kenmotsu manifold $\overline{M}$
is said to be a semisymmetric connection  if its torsion tensor
$\tau$ of the connection $\widetilde{\overline{\nabla}}$ is of the form
\begin{equation}
\label{eqn1.1a}
\tau(X,Y) = \widetilde{\overline{\nabla}}_{X}Y - \widetilde{\overline{\nabla}}_{Y}X - [X,Y]
\end{equation}
satisfies
\begin{equation}
\label{eqn1.2a}
\tau(X,Y)=\eta(Y) X - \eta(X) Y,
\end{equation}
where $\eta$ is a $1$-form. Again, if the semisymmetric connection $\widetilde{\overline{\nabla}}$
satisfies the condition
\begin{equation}
\label{eqn1.3a}
(\widetilde{\overline{\nabla}}_{X}g)(Y,Z) = 0
\end{equation}
for all $X$, $Y$, $Z\in\chi(\overline{M})$, where $\chi(\overline{M})$ is the Lie algebra of vector fields on the manifold $\overline{M}$, then
$\widetilde{\overline{\nabla}}$ is said to be a semisymmetric metric connection. Semisymmetric metric connection have
been studied by many authors in several ways to a different extent. In this connection it is mentioned that Sular
\cite{SULAR} and Tripathi \cite{TRIPATHI} studied semisymmetric metric connection in Kenmotsu manifold.\\
\indent Let $\overline{M}$ be an $n$-dimensional Kenmotsu manifold and $\overline{\nabla}$ be the Levi-Civita connection on $\overline{M}$.
A semisymmetric metric connection $\widetilde{\overline{\nabla}}$ in a Kenmotsu  manifold is defined by (\cite{SULAR}, \cite{TRIPATHI})
\begin{equation}
\label{eqn2.6a}
\widetilde{\overline{\nabla}}_{X}Y = \overline{\nabla}_{X}Y + \eta(Y)X - g(X,Y)\xi.
\end{equation}
\indent If $\overline{R}$ and $\widetilde{\overline{R}}$ are respectively the curvature tensor with respect to Levi-Civita connection $\overline{\nabla}$
and semisymmetric metric connection $\widetilde{\overline{\nabla}}$ in a Kenmotsu manifold then we have \cite{SULAR}
\begin{eqnarray}
\label{eqn2.7a}
\widetilde{\overline{R}}(X,Y)Z&=&\overline{R}(X,Y)Z - 3\{g(Y,Z)X - g(X,Z)Y\}\\
\nonumber&+&2\{\eta(Y)\eta(Z)X - \eta(X)\eta(Z)Y\}\\
\nonumber&+&2\{g(Y,Z)\eta(X) - g(X,Z)\eta(Y)\}\xi.
\end{eqnarray}
Also from (\ref{eqn2.7a}) we have
\begin{equation}
\label{eqn2.8a}
\widetilde{\overline{S}}(Y,Z)=\overline{S}(Y,Z) - (3n-5)g(Y,Z) + 2(n-2)\eta(Y)\eta(Z),
\end{equation}
where $\widetilde{\overline{S}}$ and $\overline{S}$ are respectively the Ricci tensor of a Kenmotsu manifold\\ $M^{n}(\phi, \xi, \eta, g)$
with respect to semisymmetric metric connection $\widetilde{\overline{\nabla}}$ and Levi-Civita connection $\overline{\nabla}$.\\
Again from (\ref{eqn2.7a}) we get
\begin{equation}\label{eqn2.25}
  \widetilde{\overline{R}}(X,Y)\xi=2\{\eta(X)Y-\eta(Y)X\}.
\end{equation}
\section{pseudo parallel contact cr-submanifolds of kenmotsu manifolds}
This section deals with the study of pseudo parallel (in the sense of Chaki) contact CR-submanifolds and pseudo parallel
(in the sense of Deszcz) contact CR-submanifolds of Kenmotsu manifolds.\\
\indent Following the definition of pseudo Ricci symmetric manifold \cite{CHAKI} in the sense of Chaki, we can define the following:
\begin{definition}
  A submanifold $M$ of a Kenmotsu manifold $\overline{M} $ is called pseudo parallel in the sense of Chaki if its second fundamental
  form $h$ satisfies
  \begin{equation}\label{eqn3.1}
    (\nabla_Xh)(Y,Z)=2\alpha(X)h(Y,Z)+\alpha(Y)h(X,Z)+\alpha(Z)h(X,Y)
     \end{equation}
for all $X,Y,Z $ on $M$, where $\alpha$ is a nowhere vanishing $1$-form.
\end{definition}
In particular if $\alpha(X)=0$ then $h$ is said to be parallel and $M$ is said to be parallel submanifold of $\overline{M}$.\\
We now prove the following:
\begin{theorem}
  Let $M $ be a contact CR-submanifold of a Kenmotsu manifold $\overline{M}$. Then $M$ is totally geodesic if and only if $M$
  is pseudo parallel in the sense of Chaki with $\alpha(\xi)\neq-1$.
\end{theorem}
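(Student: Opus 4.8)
The plan is to exploit the fact that the structure vector field $\xi$ is tangent to $M$, together with the identity $h(X,\xi)=0$ from $(\ref{eqn2.16})$ and the formula $\nabla_X\xi=X-\eta(X)\xi$ from $(\ref{eqn2.17})$. The ``only if'' direction is immediate: if $M$ is totally geodesic then $h\equiv 0$, so both sides of $(\ref{eqn3.1})$ vanish identically and the defining relation of pseudo parallelism holds for \emph{every} $1$-form $\alpha$; choosing $\alpha=\eta$, which is nowhere vanishing on $M$ (since $\eta(\xi)=1$) and satisfies $\alpha(\xi)=\eta(\xi)=1\neq-1$, exhibits $M$ as pseudo parallel in the sense of Chaki with $\alpha(\xi)\neq-1$.

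For the converse, suppose $(\ref{eqn3.1})$ holds with $\alpha(\xi)\neq-1$. The key step is to test $(\ref{eqn3.1})$ against $Z=\xi$ in the last slot. On the right-hand side the terms $2\alpha(X)h(Y,\xi)$ and $\alpha(Y)h(X,\xi)$ drop out by $(\ref{eqn2.16})$, leaving only $\alpha(\xi)h(X,Y)$. On the left-hand side I would expand $(\nabla_Xh)(Y,\xi)$ by means of $(\ref{eqn2.14})$: the term $\nabla_X^\bot h(Y,\xi)$ and the term $h(\nabla_XY,\xi)$ vanish because $h(\,\cdot\,,\xi)=0$, so only $-h(\nabla_X\xi,Y)$ survives, and substituting $(\ref{eqn2.17})$ gives $-h(X-\eta(X)\xi,Y)=-h(X,Y)$, using $h(\xi,Y)=0$ once more.

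Equating the two computations yields $-h(X,Y)=\alpha(\xi)h(X,Y)$, that is, $(1+\alpha(\xi))\,h(X,Y)=0$ for all $X,Y\in\Gamma(TM)$; since $\alpha(\xi)\neq-1$ the scalar $1+\alpha(\xi)$ never vanishes, and therefore $h\equiv 0$, i.e.\ $M$ is totally geodesic. I do not foresee a genuine obstacle here: the whole argument collapses to a single substitution once one notices that plugging $\xi$ into the last argument of $(\ref{eqn3.1})$ annihilates almost every term. The only point deserving a moment's care is checking that the contact CR structure enters only through the requirement $\xi\in\Gamma(TM)$, which is exactly what makes $(\ref{eqn2.16})$ and $(\ref{eqn2.17})$ available; so the statement is really a fact about submanifolds of Kenmotsu manifolds tangent to $\xi$.
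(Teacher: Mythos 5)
Your argument is correct and coincides with the paper's own proof: both substitute $Z=\xi$ into $(\ref{eqn3.1})$, expand via $(\ref{eqn2.14})$, and use $(\ref{eqn2.16})$ and $(\ref{eqn2.17})$ to obtain $[1+\alpha(\xi)]h(X,Y)=0$, whence $h\equiv 0$ when $\alpha(\xi)\neq-1$. Your only addition is making the ``trivial'' converse explicit by choosing $\alpha=\eta$, which neatly handles the requirement that $\alpha$ be nowhere vanishing.
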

\begin{proof}
  Suppose that $M$ is a contact CR-submanifold of a Kenmotsu manifold $\overline{M}$ such that $M$ is pseudo parallel in the sense of Chaki.
  Then by virtue of $(\ref{eqn2.14})$ we have from $(\ref{eqn3.1})$ that
  \begin{align}\label{eqn3.2}
 &   \nabla_X^\bot h(Y,Z)-h(\nabla_X Y,Z)-h(Y,\nabla_XZ) & \\
\nonumber&=  2\alpha(X)h(Y,Z)+\alpha(Y)h(X,Z)+\alpha(Z)h(X,Y) .
  \end{align}
  Putting $Z=\xi $ in $(\ref{eqn3.2})$ and using $(\ref{eqn2.16})$ we get
\begin{equation}\label{eqn3.3}
 - h(Y,\nabla_X \xi)=\alpha(\xi)h(X,Y).
\end{equation}
In view of $(\ref{eqn2.16})$ and $(\ref{eqn2.17})$, $(\ref{eqn3.3})$ yields
$$[1+\alpha(\xi)]h(X,Y)=0,$$
which implies that $h(X,Y)=0$ for all $X,Y$ on $M $ as $\alpha(\xi)\neq -1$.\\
Hence $M$ is totally geodesic submanifold. The converse part is trivial. This proves the theorem.
\end{proof}
\begin{corollary}\cite{ATCE2}
  Let $M$ be a contact CR-submanifold of a Kenmotsu manifold $\overline{M}$. Then $M$ is totally geodesic if and only if $M$ is parallel.
\end{corollary}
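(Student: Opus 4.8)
The plan is to reuse the contraction trick from the proof of Theorem 3.1, now with the parameter $1$-form taken to be identically zero. First I would assume that $M$ is parallel, i.e.\ $(\nabla_X h)(Y,Z)=0$ for all $X,Y,Z$ on $M$. Expanding this by means of the formula $(\ref{eqn2.14})$ for $\overline{\nabla}h$ gives
$$\nabla_X^\bot h(Y,Z)-h(\nabla_X Y,Z)-h(Y,\nabla_X Z)=0.$$
Next I would put $Z=\xi$. By $(\ref{eqn2.16})$ the terms involving $h(Y,\xi)$ and $h(\nabla_X Y,\xi)$ vanish, while $(\ref{eqn2.17})$ rewrites the last term as $-h(Y,X-\eta(X)\xi)=-h(X,Y)+\eta(X)h(Y,\xi)=-h(X,Y)$. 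Hence $h(X,Y)=0$ for all $X,Y\in\Gamma(TM)$, so $M$ is totally geodesic.

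For the converse, if $M$ is totally geodesic then $h\equiv 0$, and $(\ref{eqn2.14})$ immediately gives $(\nabla_X h)(Y,Z)=0$; thus $M$ is parallel, and this direction is trivial.

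It is worth noting why the statement is not quite an immediate quotation of Theorem 3.1: that theorem requires $\alpha$ to be a nowhere vanishing $1$-form with $\alpha(\xi)\neq-1$, whereas \emph{parallel} corresponds to $\alpha\equiv 0$, a case the theorem explicitly excludes. So strictly speaking one must re-run the short computation above with the right-hand side of $(\ref{eqn3.1})$ set to zero; the only thing that matters is that the $Z=\xi$ substitution produces the coefficient $-1$ in front of $h(X,Y)$, independently of $\alpha$, and this already forces $h=0$. I do not expect any genuine obstacle here — the entire content is the identity that the Codazzi-type derivative of $h$ evaluated along $\xi$ reproduces $-h$ itself, a consequence of $\overline{\nabla}_X\xi=X-\eta(X)\xi$ together with $h(\cdot,\xi)=0$.
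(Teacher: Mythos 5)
Your proof is correct and follows essentially the same route as the paper: the paper obtains this corollary from the computation in Theorem 3.1 (the $Z=\xi$ substitution in $(\ref{eqn2.14})$ together with $(\ref{eqn2.16})$ and $(\ref{eqn2.17})$), which with the right-hand side of $(\ref{eqn3.1})$ set to zero gives exactly your identity $-h(X,Y)=0$. Your remark that the corollary is not a literal special case of Theorem 3.1 (since there $\alpha$ is assumed nowhere vanishing) is a fair observation, and your re-running of the short computation with $\alpha\equiv 0$ handles it correctly.
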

Again following the definition of Ricci pseudosymmetric manifold in the sense of Deszcz, we can define the following:
\begin{definition}
  A submanifold $M$ of Kenmotsu manifold $\overline{M}$ is said to be pseudo parallel in the sense of Deszcz if its second fundamental
   form $h$  satisfies
   \begin{eqnarray}
   \label{eqn3.4}
     \overline{R}(X,Y)\cdot h &=& (\overline{\nabla}_X\overline{\nabla}_Y-\overline{\nabla}_Y\overline{\nabla}_X-\overline{\nabla}_{[X,Y]})h \\
     \nonumber &=& L_1Q(g,h)
   \end{eqnarray}
for all vector fields $X,Y$ tangent to $M$, where $\overline{R}$ is the curvature tensor of $\overline{M}$. In particular, if $L_1=0$ then
$M$ is said to be semiparallel.
\end{definition}
We now prove the following:
\begin{theorem}
  Let $M$ be a contact CR-submanifold of a Kenmotsu manifold $\overline M$. Then $M$ is totally geodesic if and only if $M$ is pseudo parallel
  in the sense of Deszcz with $L_1\neq 1$.
\end{theorem}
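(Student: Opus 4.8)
The plan is as follows. The ``only if'' direction is immediate: if $M$ is totally geodesic then $h\equiv 0$, so both sides of $(\ref{eqn3.4})$ vanish identically and $M$ is pseudo parallel in the sense of Deszcz for \emph{any} choice of $L_1$, in particular for $L_1\neq 1$.

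For the ``if'' direction, suppose $M$ satisfies $(\ref{eqn3.4})$. First I would unpack the left-hand side. Viewing $h$ as a $T^\bot M$-valued symmetric $(0,2)$-tensor on $M$ equipped with the connection induced by $\nabla$ and $\nabla^\bot$, the operator $\overline{R}(X,Y)\cdot h=(\overline{\nabla}_X\overline{\nabla}_Y-\overline{\nabla}_Y\overline{\nabla}_X-\overline{\nabla}_{[X,Y]})h$ evaluated on tangent vectors $Z,U$ takes the standard derivation form
\[
(\overline{R}(X,Y)\cdot h)(Z,U)=R^\bot(X,Y)h(Z,U)-h(R(X,Y)Z,U)-h(Z,R(X,Y)U),
\]
where $R$ and $R^\bot$ are the curvature tensors of $\nabla$ and $\nabla^\bot$. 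Likewise, in accordance with $(\ref{eqn1.3})$,
\[
Q(g,h)(Z,U;X,Y)=h((X\wedge_g Y)Z,U)+h(Z,(X\wedge_g Y)U),
\]
with $(X\wedge_g Y)Z=g(Y,Z)X-g(X,Z)Y$.

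Next I would specialize $(\ref{eqn3.4})$ by putting $Z=\xi$. On the left, the terms $R^\bot(X,Y)h(\xi,U)$ and $h(\xi,R(X,Y)U)$ vanish by $(\ref{eqn2.16})$, while $h(R(X,Y)\xi,U)$ is computed from $(\ref{eqn2.18})$, giving
\[
(\overline{R}(X,Y)\cdot h)(\xi,U)=\eta(Y)h(X,U)-\eta(X)h(Y,U).
\]
On the right, the terms involving $h(\xi,\cdot)$ drop out by $(\ref{eqn2.16})$, and using $g(X,\xi)=\eta(X)$, $g(Y,\xi)=\eta(Y)$ from $(\ref{eqn2.2})$ one obtains
\[
L_1\,Q(g,h)(\xi,U;X,Y)=L_1\bigl(\eta(Y)h(X,U)-\eta(X)h(Y,U)\bigr).
\]
Equating the two expressions yields $(1-L_1)\bigl(\eta(Y)h(X,U)-\eta(X)h(Y,U)\bigr)=0$ for all $X,Y,U$ tangent to $M$. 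Since $L_1\neq 1$ by hypothesis, $\eta(Y)h(X,U)=\eta(X)h(Y,U)$; choosing $Y=\xi$ and using $\eta(\xi)=1$ together with $(\ref{eqn2.16})$ forces $h(X,U)=0$, i.e. $M$ is totally geodesic.

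The only real subtlety — and the step I would be most careful with — is the correct expansion of $\overline{R}(X,Y)\cdot h$ as a derivation acting on the bundle-valued tensor $h$, together with the bookkeeping of the normal part $R^\bot$; once that is set up, everything is forced by $(\ref{eqn2.16})$ and $(\ref{eqn2.18})$. One could equally well set $U=\xi$ instead of $Z=\xi$, invoking the symmetry of $h$, and reach the same conclusion.
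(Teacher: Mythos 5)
Your proposal is correct and follows essentially the same route as the paper: expand $\overline{R}(X,Y)\cdot h$ as a derivation via $R^\bot$, $R$, write out $Q(g,h)$, substitute $\xi$ (you use $Z=\xi$ and then $Y=\xi$, the paper uses $X=U=\xi$), and invoke $h(\cdot,\xi)=0$ together with $R(X,Y)\xi=\eta(X)Y-\eta(Y)X$ to force $(L_1-1)h=0$. The choice of slots for $\xi$ is an immaterial variation; both arguments and the trivial converse agree with the paper's proof.
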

\begin{proof}
  Let $M$ be a contact CR-submanifold of a Kenmotsu manifold $\overline{M}$.\\
  First, suppose that $M$ is pseudo parallel in the sense of Deszcz.
  Then we have the relation $(\ref{eqn3.4})$, i.e.,
\begin{equation}\label{eqn3.5}
  (\overline{R}(X,Y)\cdot h)(Z,U)=L_1Q(g,h)(Z,U;X,Y).
\end{equation}
It is known from tensor algebra that
\begin{eqnarray}
\label{eqn3.6}
(\overline{R}(X,Y)\cdot h)(Z,U)&=&R^\bot(X,Y)h(Z,U)-h(R(X,Y)Z,U)\\
\nonumber&-&h(Z,R(X,Y)U)
\end{eqnarray}
for all vector fields $X,Y,Z$ and $U$, where
\begin{equation*}
  R^\bot(X,Y)=[\nabla_X^\bot,\nabla_Y^\bot]-\nabla_{[X,Y]}^\bot.
\end{equation*}
By similar way of $(\ref{eqn1.3})$, we have
\begin{eqnarray}
\label{eqn3.7}
  Q(g,h)(Z,U;X,Y) &=& g(Y,Z)h(X,U)-g(X,Z)h(Y,U)\\
  \nonumber&+& g(Y,U)h(X,Z)-g(X,U)h(Y,Z).
\end{eqnarray}
In view of $(\ref{eqn3.6})$ and $(\ref{eqn3.7})$ we get from $(\ref{eqn3.5})$ that
\begin{align}\label{eqn3.8}
 & R^\bot(X,Y)h(Z,U)-h(R(X,Y)Z,U)-h(Z,R(X,Y)U)&\\
 \nonumber&=L_1[g(Y,Z)h(X,U)-g(X,Z)h(Y,U)+g(Y,U)h(X,Z)&\\
 \nonumber&-g(X,U)h(Y,Z)].
\end{align}
Putting $X=U=\xi$ in $(\ref{eqn3.8})$ and using $(\ref{eqn2.16})$ we get
\begin{equation}\label{eqn3.9}
  h(Z,R(\xi,Y)\xi)=L_1h(Y,Z).
\end{equation}
Feeding $(\ref{eqn2.18})$ in $(\ref{eqn3.9})$ and using $(\ref{eqn2.16})$ we get
  $(L_1-1)h(Y,Z)=0$,
which implies that $h(Y,Z)=0$ for all $Y,Z$ on $M$, i.e., $M$ is totally geodesic, since $L_1\neq 1$. The converse part is trivial.
This proves the theorem.
\end{proof}
\begin{corollary}
  Let $M$ be a contact CR-submanifold of a Kenmotsu manifold $\overline{M}$. Then $M$ is totally geodesic if and only if $M$
  is semiparallel.
\end{corollary}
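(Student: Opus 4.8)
The plan is to obtain this corollary as the special case $L_1 \equiv 0$ of Theorem~3.2. By Definition~3.2, $M$ is semiparallel precisely when $\overline{R}(X,Y)\cdot h = 0$, that is, when $M$ is pseudo parallel in the sense of Deszcz with the function $L_1$ identically zero. Since $0 \neq 1$, the hypothesis $L_1 \neq 1$ of Theorem~3.2 holds automatically, and that theorem at once gives that $M$ is totally geodesic if and only if $M$ is semiparallel. So the work reduces to invoking the previous theorem; nothing genuinely new has to be proved.

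For self-containedness I would also spell out the one-line direct argument, which is just the proof of Theorem~3.2 with $L_1$ set to zero. Assuming $M$ semiparallel, the tensor-algebra identity $(\ref{eqn3.6})$ reads
\[
R^\bot(X,Y)h(Z,U) - h(R(X,Y)Z,U) - h(Z,R(X,Y)U) = 0
\]
for all $X,Y,Z,U$ tangent to $M$. Setting $X = U = \xi$ and using $(\ref{eqn2.16})$ to kill all the terms of the form $h(\,\cdot\,,\xi)$, this collapses to $h(Z,R(\xi,Y)\xi) = 0$. Then $(\ref{eqn2.18})$ gives $R(\xi,Y)\xi = \overline{R}(\xi,Y)\xi = Y - \eta(Y)\xi$, and one more application of $(\ref{eqn2.16})$ yields $h(Y,Z) = 0$ for all $Y,Z$ on $M$; hence $M$ is totally geodesic. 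The converse is immediate, since $h \equiv 0$ makes both sides of the semiparallel condition vanish.

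There is essentially no obstacle in this argument. The only thing to verify is the numerical point that the excluded value $L_1 = 1$ of Theorem~3.2 is not $0$, which is clear. One small care-point is that $R$ in $(\ref{eqn3.6})$ is the induced curvature of $M$, not $\overline{R}$; but $(\ref{eqn2.18})$ is precisely the statement that $R$ and $\overline{R}$ agree when one argument is $\xi$, so the substitution is legitimate and no further computation is needed.
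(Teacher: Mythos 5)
Your proposal is correct and matches the paper's intent exactly: the corollary is the special case $L_1\equiv 0$ of Theorem 3.2 (since $0\neq 1$), and your spelled-out computation with $X=U=\xi$, using $(\ref{eqn2.16})$ and $(\ref{eqn2.18})$, is precisely the paper's proof of that theorem specialized to $L_1=0$. Nothing further is needed.
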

From Corollary 3.1, Corollary 3.2, Theorem 3.1, Theorem 3.2, we can state the following:
\begin{theorem}
  Let $M$ be a contact CR-submanifold of a Kenmotsu manifold $\overline{M}$. Then the following statements are equivalent: \\
\emph{(i)} $M$ is totally geodesic,\\
\emph{(ii)} $M$ is parallel,\\
\emph{(iii)} $M$ is semiparallel,\\
\emph{(iv)} $M$ is pseudo parallel in the sense of Chaki with $\alpha(\xi)\neq -1$,\\
\emph{(v)} $M$ is pseudo parallel in the sense of Deszcz with $L_1\neq 1$.
\end{theorem}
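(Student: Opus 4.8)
The plan is to recognize that the final theorem is nothing more than a bookkeeping consolidation of the four results already established in Section 3, so no new computation is required. The logical skeleton I would assemble is a cycle of implications among the five statements (i)--(v). The statement ``$M$ is totally geodesic'' (i) is the hub: Corollary 3.2 gives the equivalence (i) $\Leftrightarrow$ (iii), Theorem 3.1 gives (i) $\Leftrightarrow$ (iv), Theorem 3.2 gives (i) $\Leftrightarrow$ (v), and Corollary 3.1 gives (i) $\Leftrightarrow$ (ii). I would simply quote each of these four biconditionals in turn and observe that chaining them through the common vertex (i) yields the mutual equivalence of all five conditions.

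Concretely, I would write: ``By Corollary 3.1, (i) and (ii) are equivalent. By Corollary 3.2, (i) and (iii) are equivalent. By Theorem 3.1, (i) and (iv) are equivalent (here the hypothesis $\alpha(\xi)\neq -1$ is exactly the condition built into the statement). By Theorem 3.2, (i) and (v) are equivalent (with the condition $L_1\neq 1$). Hence all of (i)--(v) are pairwise equivalent, which proves the theorem.'' It is worth noting explicitly that the side conditions $\alpha(\xi)\neq -1$ in (iv) and $L_1\neq 1$ in (v) are not extra hypotheses imposed on $M$ but are part of the respective statements, so they are available precisely where Theorems 3.1 and 3.2 need them; I would flag this so the reader does not worry about a gap.

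There is essentially no obstacle here: the only thing to be careful about is that Theorems 3.1 and 3.2 are genuine biconditionals (each has a ``converse part is trivial'' remark establishing the direction ``totally geodesic $\Rightarrow$ pseudo parallel''), so the chain of $\Leftrightarrow$'s really does close up into a full equivalence rather than a one-directional implication. Since each of the four cited results is stated as an ``if and only if,'' this is automatic. Thus the proof is a two-or-three-line citation argument, and I would present it exactly as such, perhaps adding a sentence that the proof ``follows immediately by combining the cited results'' to make clear that the work has already been done above.

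\begin{proof}
The equivalences are an immediate consequence of the results established above. By Corollary 3.1, $M$ is totally geodesic if and only if $M$ is parallel, i.e., (i) $\Leftrightarrow$ (ii). By Corollary 3.2, $M$ is totally geodesic if and only if $M$ is semiparallel, i.e., (i) $\Leftrightarrow$ (iii). By Theorem 3.1, $M$ is totally geodesic if and only if $M$ is pseudo parallel in the sense of Chaki with $\alpha(\xi)\neq -1$, i.e., (i) $\Leftrightarrow$ (iv). By Theorem 3.2, $M$ is totally geodesic if and only if $M$ is pseudo parallel in the sense of Deszcz with $L_1\neq 1$, i.e., (i) $\Leftrightarrow$ (v). Since each of these is a genuine biconditional, chaining them through the common statement (i) shows that (i), (ii), (iii), (iv) and (v) are pairwise equivalent. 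This proves the theorem.
\end{proof}
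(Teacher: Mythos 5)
Your proposal is correct and matches the paper's approach exactly: the paper also obtains this theorem by simply combining Corollary 3.1, Corollary 3.2, Theorem 3.1 and Theorem 3.2 through the common statement that $M$ is totally geodesic. Your write-up just makes that citation argument explicit.
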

\section{pseudo parallel contact cr-submanifolds of kenmotsu manifolds with respect to semisymmetric metric connection}
We now consider $M$ be a contact CR-submanifold of a Kenmotsu manifold $\overline{M}$ with respect to Levi-Civita connection $\overline{\nabla}$
and semisymmetric metric connection  $\widetilde{\overline{\nabla}}$. Let $\nabla$ be the induced connection on $M$ from the connection
$\overline{\nabla}$ and $\widetilde{\nabla}$ be the induced connection on $M$ from the connection $\widetilde{\overline{\nabla}}$.\\
\indent Let $h$ and $\widetilde{h}$ be the second fundamental form with respect to Levi-Civita connection and semisymmetric metric connection,
respectively. Then we have
\begin{equation}\label{eqn3.10}
  \widetilde{\overline{\nabla}}_X Y=\widetilde{\nabla}_X Y+\widetilde{h}(X,Y).
\end{equation}
 By virtue of (\ref{eqn2.10}) and (\ref{eqn2.6a}) we have from (\ref{eqn3.10}) that
\begin{eqnarray}
\label{eqn3.11}
  \widetilde{\nabla}_X Y+\widetilde{h}(X,Y) &=& \overline{\nabla}_XY+\eta(Y)X-g(X,Y)\xi \\
\nonumber  &=& \nabla_XY+h(X,Y)+\eta(Y)X-g(X,Y)\xi.
\end{eqnarray}
Since $X,\xi\in TM$, by equating the tangential and normal components of (\ref{eqn3.11}) we get
\begin{equation}\label{eqn3.12}
  \widetilde{\nabla}_X Y=\nabla_XY+\eta(Y)X-g(X,Y)\xi
\end{equation}
and
\begin{equation}\label{eqn3.13}
  \widetilde{h}(X,Y)=h(X,Y),
\end{equation}
which implies that the second fundamental forms with respect to Levi-Civita connection and semisymmetric metric connection are same.\\
\indent This leads to the following:
\begin{theorem}
  Let $M$ be a contact CR-submanifold of a Kenmotsu manifold $\overline{M}$ with respect to semisymmetric metric connection.
  Then\\
 \emph{(i)} $M$ admits semisymmetric metric connection.\\
 \emph{(ii)} The second fundamental forms with respect to Riemannian connection and semisymmetric metric connection are equal.
\end{theorem}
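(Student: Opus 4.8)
The plan is to read off both assertions from the tangential/normal decomposition already recorded in equations $(\ref{eqn3.10})$--$(\ref{eqn3.13})$. First I would note that, since $M$ is tangent to $\xi$ and $X\in\Gamma(TM)$, the vector field $\eta(Y)X-g(X,Y)\xi$ occurring in $(\ref{eqn2.6a})$ is itself tangent to $M$. Therefore, writing
\[
\widetilde{\overline{\nabla}}_X Y=\overline{\nabla}_X Y+\eta(Y)X-g(X,Y)\xi=\nabla_X Y+h(X,Y)+\eta(Y)X-g(X,Y)\xi
\]
and splitting into components relative to $M$, the normal part is exactly $h(X,Y)$ while the tangential part is $\nabla_X Y+\eta(Y)X-g(X,Y)\xi$. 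Comparing with the Gauss formula $(\ref{eqn3.10})$ for the semisymmetric metric connection yields $(\ref{eqn3.12})$ and $(\ref{eqn3.13})$, and assertion (ii) is then immediate from $(\ref{eqn3.13})$, namely $\widetilde{h}=h$.

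For assertion (i) I would check directly that the induced connection $\widetilde{\nabla}$ given by $(\ref{eqn3.12})$ is a semisymmetric metric connection on $(M,g)$. Its torsion is computed from $(\ref{eqn3.12})$, using that $\nabla$ is torsion-free:
\[
\widetilde{\nabla}_X Y-\widetilde{\nabla}_Y X-[X,Y]=\big(\nabla_X Y-\nabla_Y X-[X,Y]\big)+\eta(Y)X-\eta(X)Y=\eta(Y)X-\eta(X)Y,
\]
so the torsion has the semisymmetric form $(\ref{eqn1.2a})$ with the same $1$-form $\eta$. Next, expanding $(\widetilde{\nabla}_X g)(Y,Z)=Xg(Y,Z)-g(\widetilde{\nabla}_X Y,Z)-g(Y,\widetilde{\nabla}_X Z)$ with $(\ref{eqn3.12})$, the correction terms $\eta(Y)g(X,Z)-g(X,Y)\eta(Z)$ and $\eta(Z)g(X,Y)-g(X,Z)\eta(Y)$ cancel in pairs, leaving $Xg(Y,Z)-g(\nabla_X Y,Z)-g(Y,\nabla_X Z)$, which vanishes because $\nabla$ is the Levi-Civita connection of $g$ on $M$. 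Hence $(\widetilde{\nabla}_X g)(Y,Z)=0$, so by $(\ref{eqn1.2a})$ and $(\ref{eqn1.3a})$ the connection $\widetilde{\nabla}$ is a semisymmetric metric connection, which is assertion (i).

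The whole argument is bookkeeping with the Gauss formula; the one place that genuinely uses the standing hypotheses is the observation that $\eta(Y)X-g(X,Y)\xi\in\Gamma(TM)$, which legitimizes the tangential/normal splitting in $(\ref{eqn3.11})$ and relies on $M$ being tangent to $\xi$. I do not anticipate any real obstacle beyond this: once the decomposition $(\ref{eqn3.11})$ is in place, parts (i) and (ii) are simply its tangential and normal components, matched against the two defining conditions $(\ref{eqn1.2a})$ and $(\ref{eqn1.3a})$ of a semisymmetric metric connection.
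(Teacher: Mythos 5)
Your proposal is correct and follows essentially the same route as the paper: decompose the Gauss formula for $\widetilde{\overline{\nabla}}$ via $(\ref{eqn2.6a})$ into tangential and normal components, using that $\eta(Y)X-g(X,Y)\xi$ is tangent to $M$ since $\xi\in\Gamma(TM)$, which yields $(\ref{eqn3.12})$ and $\widetilde{h}=h$, i.e.\ part (ii). The only difference is that you explicitly verify the torsion condition $(\ref{eqn1.2a})$ and metricity $(\ref{eqn1.3a})$ for the induced connection $\widetilde{\nabla}$ to justify part (i) --- a check the paper leaves implicit after writing $(\ref{eqn3.12})$ --- so your argument is, if anything, slightly more complete.
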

Now we define the following:
\begin{definition}
  A submanifold $M$ of a Kenmotsu manifold $\overline{M}$ is called pseudo parallel in the sense of Chaki if its second
  fundamental form $\widetilde{h}$ satisfies
  \begin{equation*}
    (\widetilde{\nabla}_X\widetilde{h})(Y,Z)=2\alpha(X)\widetilde{h}(Y,Z)+\alpha(Y)\widetilde{h}(X,Z)+\alpha(Z)\widetilde{h}(X,Y)
  \end{equation*}
for all $X,Y,Z$ on $M$.\\
\indent Let us take $M$ be a contact CR-submanifold of a Kenmotsu manifold with respect to semisymmetric metric connection.
  Suppose that $M$ is pseudo parallel in the sense of Chaki with respect to semisymmetric metric connection. Then we have 
\begin{equation}\label{eqn4.4a}
(\widetilde{\nabla}_Xh)(Y,Z)=2\alpha(X)h(Y,Z)+\alpha(Y)h(X,Z)+\alpha(Z)h(X,Y).
\end{equation}
\end{definition}
In view of (\ref{eqn3.12}) and (\ref{eqn2.16}) we have from (\ref{eqn4.4a}) that
\begin{align*}
  &(\nabla_Xh)(Y,Z)+g(h(Y,Z),\xi)-g(X,h(Y,Z))\xi &\\
 &-\eta(Y)h(X,Z)-\eta(Z)h(X,Y)&\\
   &= 2\alpha(X)h(Y,Z)+\alpha(Y)h(X,Z)+\alpha(Z)h(X,Y),
\end{align*}
i.e.,
\begin{align}\label{eqn4.4b}
 & \nabla^\bot_X h(Y,Z)-h(\nabla_XY,Z)-h(Y,\nabla_XZ)&\\
\nonumber & +g(h(Y,Z),\xi)-g(X,h(Y,Z))\xi &\\
\nonumber & -\eta(Y)h(X,Z)-\eta(Z)h(X,Y)&\\
\nonumber &= 2\alpha(X)h(Y,Z)+\alpha(Y)h(X,Z)+\alpha(Z)h(X,Y).
\end{align}
Putting $Z=\xi$ in (\ref{eqn4.4b}) and using (\ref{eqn2.16}), we get
\begin{equation}\label{eqn4.4c}
  -h(Y,\nabla_X\xi)-h(X,Y)=\alpha(\xi) h(X,Y).
\end{equation}
 By virtue of (\ref{eqn2.16}) and (\ref{eqn2.17}) we have from (\ref{eqn4.4c}) that
 $[\alpha(\xi)+2]h(X,Y)=0,$ which implies that $h(X,Y)=0$ provided $\alpha(\xi)\neq -2.$\\
\indent Thus we can state the following:
\begin{theorem}
  Let $M$ be a contact CR-submanifold of a Kenmotsu manifold $\overline{M}$ with respect to semisymmetric metric connection.
Then $M$ is totally geodesic if and only if $M$ is pseudo parallel with respect to semisymmetric metric connection in the sense
of Chaki, provided $\alpha(\xi)\neq -2$.
\end{theorem}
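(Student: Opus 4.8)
The plan is to run exactly the argument of Theorem 3.1, but with the induced semisymmetric metric connection $\widetilde{\nabla}$ in place of $\nabla$, using Theorem 4.1 to identify the two second fundamental forms. First I would assume that $M$ is pseudo parallel in the sense of Chaki with respect to the semisymmetric metric connection, which by part (ii) of Theorem 4.1 takes the form (\ref{eqn4.4a}) for the common second fundamental form $h$. The crucial computation is to rewrite the left-hand side $(\widetilde{\nabla}_X h)(Y,Z)$ in terms of Levi-Civita data: applying the analogue of (\ref{eqn2.14}) for $\widetilde{\nabla}$, the relation (\ref{eqn3.12}) between $\widetilde{\nabla}$ and $\nabla$, and the vanishing (\ref{eqn2.16}) of $h(\cdot,\xi)$, one obtains $(\widetilde{\nabla}_X h)(Y,Z)=(\nabla_X h)(Y,Z)-\eta(Y)h(X,Z)-\eta(Z)h(X,Y)$, which is (\ref{eqn4.4b}) (the apparent extra terms $g(h(Y,Z),\xi)$ and $g(X,h(Y,Z))\xi$ vanish since $h$ is normal-bundle valued).

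Next I would specialize by putting $Z=\xi$ in (\ref{eqn4.4b}); by (\ref{eqn2.16}) the normal term and the term $h(\nabla_X Y,\xi)$ drop out, leaving $-h(Y,\nabla_X\xi)-h(X,Y)=\alpha(\xi)h(X,Y)$, i.e. (\ref{eqn4.4c}). Substituting (\ref{eqn2.17}) and using (\ref{eqn2.16}) once more converts this into $[\alpha(\xi)+2]h(X,Y)=0$, and since $\alpha(\xi)\neq -2$ by hypothesis the scalar factor is nonzero, forcing $h\equiv 0$; thus $M$ is totally geodesic. The converse is immediate: if $h\equiv 0$ then both sides of (\ref{eqn4.4a}) vanish and $M$ is trivially pseudo parallel in the sense of Chaki with respect to the semisymmetric metric connection.

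There is no genuinely hard step; the computation is entirely parallel to that of Theorem 3.1, the only new feature being the correction terms $\eta(Y)h(X,Z)+\eta(Z)h(X,Y)$ coming from (\ref{eqn3.12}), which is precisely why the critical value of $\alpha(\xi)$ changes from $-1$ to $-2$. The only point requiring care is the bookkeeping in the first step: checking that the normal connection induced from $\widetilde{\overline{\nabla}}$ agrees with $\nabla^\bot$ and that the naive extra terms produced by (\ref{eqn3.12}) indeed vanish because $h$ takes values in the normal bundle, so that no spurious curvature-type contributions appear.
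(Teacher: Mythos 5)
Your proposal is correct and follows essentially the same route as the paper: identify $\widetilde{h}=h$ via (\ref{eqn3.13}), rewrite $(\widetilde{\nabla}_X h)(Y,Z)$ using (\ref{eqn3.12}) and (\ref{eqn2.16}) (the paper carries the terms $g(h(Y,Z),\xi)$ and $g(X,h(Y,Z))\xi$ formally, which, as you note, vanish since $h$ is normal-valued), then set $Z=\xi$ and use (\ref{eqn2.17}) to get $[\alpha(\xi)+2]h=0$. Your observation that the induced normal connection from $\widetilde{\overline{\nabla}}$ coincides with $\nabla^\bot$ is the correct justification for the step the paper performs implicitly.
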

\begin{corollary}
  Let $M$ be a contact CR-submanifold of a Kenmotsu manifold $\overline{M}$ with respect to semisymmetric metric connection.
  Then $M$ is totally geodesic if and only if $M$ is parallel with respect to semisymmetric metric connection.
\end{corollary}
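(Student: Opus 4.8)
The plan is to obtain this as an immediate consequence of Theorem 4.2, by observing that ``parallel with respect to the semisymmetric metric connection'' is precisely the special case of ``pseudo parallel in the sense of Chaki with respect to the semisymmetric metric connection'' in which the $1$-form $\alpha$ vanishes identically. Indeed, setting $\alpha\equiv 0$ in Definition 4.1 collapses the defining relation to $(\widetilde{\nabla}_X\widetilde{h})(Y,Z)=0$, which is exactly the parallelism condition for $\widetilde{h}$.

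Having made this identification, note that with $\alpha\equiv 0$ one has in particular $\alpha(\xi)=0\neq -2$, so the proviso appearing in Theorem 4.2 is automatically satisfied. That theorem then yields at once that $M$ is totally geodesic. For the converse, if $M$ is totally geodesic then $\widetilde{h}=h=0$ by Theorem 4.1, so every covariant derivative of $\widetilde{h}$ vanishes and $M$ is trivially parallel with respect to the semisymmetric metric connection.

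Alternatively, one may simply rerun the short computation of Theorem 4.2 in this simpler setting: using $\widetilde{h}=h$ and the relation $(\ref{eqn3.12})$ between $\widetilde{\nabla}$ and $\nabla$, the condition $(\widetilde{\nabla}_Xh)(Y,Z)=0$ expands, after the substitution $Z=\xi$ and the use of $(\ref{eqn2.16})$ and $(\ref{eqn2.17})$, to $2\,h(X,Y)=0$, hence $h\equiv 0$. Since no term proportional to $\alpha(\xi)h(X,Y)$ is subtracted off, the coefficient of $h(X,Y)$ is the nonzero constant $2$ and no extra hypothesis is required, which is exactly why the restriction $\alpha(\xi)\neq -2$ of Theorem 4.2 does not reappear here. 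There is essentially no obstacle in this argument; the only point deserving explicit mention is that the auxiliary condition of Theorem 4.2 is vacuous in the parallel case.
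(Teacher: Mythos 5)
Your proposal is correct and follows essentially the same route as the paper: the corollary is just the $\alpha\equiv 0$ specialization of the computation behind Theorem 4.2, where the key identity $[\alpha(\xi)+2]h(X,Y)=0$ reduces to $2h(X,Y)=0$, together with $\widetilde{h}=h$ for the trivial converse. Your remark that the proviso $\alpha(\xi)\neq -2$ is automatically vacuous here (and your backup direct computation, which sidesteps the formal requirement that $\alpha$ be nowhere vanishing in the Chaki definition) is exactly the intended reasoning.
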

\begin{definition}
  A submanifold $M$ of a Kenmotsu manifold $\overline{M}$ with respect to semisymmetric metric connection is said to be pseudo parallel
  in the sense of Deszcz with respect to semisymmetric metric connection if
\begin{equation}
\label{eqn3.14}
\widetilde{\overline{R}}(X,Y)\cdot \widetilde{h}=L_1Q(g,\widetilde{h})
\end{equation}
holds for all vector fields $X,Y$ tangent to $M$, where $\widetilde{\overline{R}}$ is the curvature tensor of $\overline{M}$.
In particular if $L_1=0$ then $M$ is said to be semiparallel with respect to semisymmetric metric connection.
\end{definition}
We now prove the following:
\begin{theorem}
  Let $M$ be a contact CR-submanifold of a Kenmotsu manifold $\overline{M} $ with respect to semisymmetric metric connection. Then
  $M$ is totally geodesic if and only if $M$ is pseudo parallel in the sense of Deszcz with respect to semisymmetric metric connection
  with $L_1\neq 2$.
\end{theorem}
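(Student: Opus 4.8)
The plan is to mimic the proof of Theorem 3.2, now carried out for the semisymmetric metric connection $\widetilde{\overline\nabla}$ and its curvature $\widetilde{\overline R}$. The two structural facts that make this work are $\widetilde h=h$ from (\ref{eqn3.13}) and $h(X,\xi)=0$ from (\ref{eqn2.16}), which together give $\widetilde h(X,\xi)=0$ and, by metricity of $\widetilde\nabla$, also $\widetilde A_V\xi=0$.

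First I would write the defining condition (\ref{eqn3.14}) out componentwise. Exactly as in (\ref{eqn3.6}) and (\ref{eqn3.7}), tensor algebra gives
\[
(\widetilde{\overline R}(X,Y)\cdot\widetilde h)(Z,U)=\widetilde R^{\bot}(X,Y)\widetilde h(Z,U)-\widetilde h(\widetilde R(X,Y)Z,U)-\widetilde h(Z,\widetilde R(X,Y)U)
\]
and
\[
Q(g,\widetilde h)(Z,U;X,Y)=g(Y,Z)\widetilde h(X,U)-g(X,Z)\widetilde h(Y,U)+g(Y,U)\widetilde h(X,Z)-g(X,U)\widetilde h(Y,Z),
\]
where $\widetilde R$ and $\widetilde R^{\bot}$ denote the curvature operators of the induced connections $\widetilde\nabla$ and $\widetilde\nabla^{\bot}$. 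Plugging these into (\ref{eqn3.14}) and setting $X=U=\xi$, the first two terms on the left drop out because $\widetilde h(Z,\xi)=0$ and $\widetilde h(\cdot,\xi)=0$, while on the right only $-g(\xi,\xi)\widetilde h(Y,Z)$ survives; with $\eta(\xi)=1$ this produces the exact analogue of (\ref{eqn3.9}), namely $\widetilde h(Z,\widetilde R(\xi,Y)\xi)=L_1\widetilde h(Y,Z)$.

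The last ingredient is the value of $\widetilde R(\xi,Y)\xi$. Here I would first note that (\ref{eqn3.12}) together with (\ref{eqn2.17}) gives $\widetilde\nabla_X\xi=2(X-\eta(X)\xi)$, and then either compute $\widetilde R(\xi,Y)\xi$ directly from (\ref{eqn3.12}) — a short computation in which the torsion terms cancel — or invoke the Gauss equation for $\widetilde\nabla$ together with $\widetilde h(\cdot,\xi)=0$, $\widetilde A_V\xi=0$ and (\ref{eqn2.25}); either way one obtains $\widetilde R(\xi,Y)\xi=2\{Y-\eta(Y)\xi\}$. Substituting this into the displayed identity and using $\widetilde h=h$ with $h(Z,\xi)=0$ reduces it to $2h(Y,Z)=L_1h(Y,Z)$, i.e. $(L_1-2)h(Y,Z)=0$; since $L_1\neq2$ we conclude $h=0$, so $M$ is totally geodesic. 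The converse is trivial, since for a totally geodesic submanifold $\widetilde h=h=0$ makes both sides of (\ref{eqn3.14}) vanish.

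I expect the only genuinely delicate point to be the computation of $\widetilde R(\xi,Y)\xi$: one must check carefully that the extra torsion contributions (or, in the Gauss-equation approach, the shape-operator and $\widetilde\nabla\widetilde h$ terms) all carry a factor $\widetilde h(\cdot,\xi)$, $\widetilde A_V\xi$, or a derivative of $\eta$ that vanishes on $\xi$, so that the value coincides with $\widetilde{\overline R}(\xi,Y)\xi=2\{Y-\eta(Y)\xi\}$ and no normal component is lost. Everything else is a direct substitution paralleling Section 3.
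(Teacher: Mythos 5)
Your proposal is correct and follows essentially the same route as the paper: substitute $X=U=\xi$ into the componentwise form of the pseudo parallelism condition, use $\widetilde h=h$ and $h(\cdot,\xi)=0$ to reduce it to $h(Z,\widetilde R(\xi,Y)\xi)=L_1h(Y,Z)$, and then use $\widetilde R(X,Y)\xi=\widetilde{\overline R}(X,Y)\xi=2\{\eta(X)Y-\eta(Y)X\}$ (the paper's equation (4.9), obtained exactly as you suggest from the Gauss equation together with $h(\cdot,\xi)=0$ and (2.25)) to get $(L_1-2)h=0$. Your alternative direct computation of $\widetilde R(\xi,Y)\xi$ from the induced connection formula is also valid, but it is only a minor variant of the same argument.
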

\begin{proof}
Let $M$ be a contact CR-submanifold of a Kenmotsu manifold $\overline{M}$ with respect to semisymmetric metric connection.
Suppose that $M$ is pseudo parallel in the sense of Deszcz with respect to semisymmetric metric connection. Then we have
from $(\ref{eqn3.14})$ that
\begin{equation}
\label{eqn3.15}
\widetilde{\overline{R}}(X,Y)\cdot h=L_1Q(g,h),
\end{equation}
i.e.
\begin{align}\label{eqn3.16}
&\widetilde{R}^\bot(X,Y) h(Z,U)-h(\widetilde{R}(X,Y)Z,U)-h(Z,\widetilde{R}(X,Y)U) &  \\
\nonumber  &=L_1[g(Y,Z)h(X,U)-g(X,Z)h(Y,U)+g(Y,U)h(X,Z) & \\
\nonumber &\ \ \ -g(X,U)h(Y,Z)].
\end{align}
Putting $X=U=\xi$ in (\ref{eqn3.16}) and using (\ref{eqn2.16}), we get
\begin{equation}\label{eqn3.17}
  h(Z,\widetilde{R}(\xi, Y)\xi)=L_1h(Y,Z).
\end{equation}
Now by virtue of (\ref{eqn2.16}) and (\ref{eqn2.25}) we have
\begin{eqnarray}
\label{eqn3.18}
  \widetilde{R}(X,Y)\xi &=& \widetilde{\overline{R}}(X,Y)\xi\\
  \nonumber&=&2\{\eta(X)Y-\eta(Y)X\}.
\end{eqnarray}
In view of (\ref{eqn3.18}), (\ref{eqn3.17}) yields
\begin{equation*}
  (L_1-2)h(Y,Z)=0,
\end{equation*}
which implies that $h(Y,Z)=0$ for all $Y,Z,$ on $M$, i.e., $M$ is totally geodesic, since $L_1\neq 2$.
The converse part is trivial. This proves the theorem.
\end{proof}
\begin{corollary}
  Let $M$ be a contact CR-submanifold of a Kenmotsu manifold $\overline{M}$ with respect to semisymmetric metric connection.
  Then $M$ is totally geodesic if and only if $M$ is semi parallel with respect  to semisymmetric metric connection.
\end{corollary}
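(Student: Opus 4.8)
The plan is to obtain this corollary as the special case $L_1=0$ of Theorem 4.3. By definition, $M$ being semiparallel with respect to the semisymmetric metric connection means that (\ref{eqn3.14}) holds with $L_1=0$, that is, $\widetilde{\overline R}(X,Y)\cdot\widetilde h=0$. Since $0\neq 2$, the numerical hypothesis $L_1\neq 2$ of Theorem 4.3 is automatically satisfied, so that theorem applies verbatim and yields the asserted equivalence. For the converse direction (totally geodesic $\Rightarrow$ semiparallel) there is nothing to prove: if $M$ is totally geodesic then $h\equiv 0$, hence $\widetilde h\equiv 0$ by Theorem 4.1(ii), and both sides of (\ref{eqn3.14}) vanish identically.

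If one prefers a self-contained argument, I would simply repeat the computation in the proof of Theorem 4.3 with $L_1$ set equal to $0$. Using $\widetilde h=h$ from Theorem 4.1(ii) and the tensor identity analogous to (\ref{eqn3.6}), the hypothesis $\widetilde{\overline R}(X,Y)\cdot h=0$ becomes
\begin{equation*}
\widetilde R^\bot(X,Y)h(Z,U)-h(\widetilde R(X,Y)Z,U)-h(Z,\widetilde R(X,Y)U)=0 .
\end{equation*}
Putting $X=U=\xi$ and using (\ref{eqn2.16}) collapses this to $h(Z,\widetilde R(\xi,Y)\xi)=0$; then (\ref{eqn3.18}) together with $\eta(\xi)=1$ gives $\widetilde R(\xi,Y)\xi=2\{Y-\eta(Y)\xi\}$, so that a final application of (\ref{eqn2.16}) yields $2h(Y,Z)=0$, i.e. $h\equiv 0$ and $M$ is totally geodesic.

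I expect no genuine obstacle here. The only points requiring attention are: invoking Theorem 4.1(ii) so that the hypothesis, stated for $\widetilde h$, can be transcribed in terms of $h$; and observing that the excluded value $L_1=2$ in Theorem 4.3 is harmless for semiparallelism since $0\neq 2$. Everything else is routine and immediate.
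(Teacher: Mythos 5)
Your proposal is correct and takes essentially the same route the paper intends: the corollary is precisely the special case $L_1=0$ of the preceding theorem on Deszcz pseudo parallelism with respect to the semisymmetric metric connection (admissible since $0\neq 2$), with the converse trivial because $\widetilde h=h=0$ makes both sides of the defining condition vanish. Your self-contained computation, which reduces the hypothesis to $h(Z,\widetilde R(\xi,Y)\xi)=0$ and then uses $\widetilde R(\xi,Y)\xi=2\{Y-\eta(Y)\xi\}$ to get $2h(Y,Z)=0$, reproduces the paper's own calculation with $L_1$ set to $0$.
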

From Corollary 4.1, Corollary 4.2, Theorem 4.1 and Theorem 4.2, we can state the following:
\begin{theorem}
  Let $M$ be a contact CR-submanifold of a Kenmotsu manifold $\overline{M}$ with respect to semisymmetric metric
   connection. Then the following statements are equivalent:\\
  \emph{(i)} $M$ is totally geodesic,\\
  \emph{(ii)} $M$ is parallel with respect to semisymmetric metric connection,\\
  \emph{(iii)} $M$ is semiparallel with respect to semisymmetric metric connection,\\
\emph{(iv)} $M$ is pseudo parallel in the sense of Chaki with respect to semisymmetric metric connection with $\alpha(\xi)\neq -2$,\\
\emph{(v)} $M$ is pseudo parallel in the sense of Deszcz with respect to semisymmetric metric connection with $L_1\neq 2$.
\end{theorem}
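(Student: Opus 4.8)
The plan is to obtain this theorem as a direct corollary of the results already established in this section; the statement merely repackages four equivalences, each of which was proved separately against the common benchmark of total geodesy, so essentially no new computation is required.

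First I would invoke Theorem~4.1(ii), which asserts that the second fundamental forms $h$ and $\widetilde{h}$ with respect to the Levi-Civita connection and the semisymmetric metric connection coincide; this is what lets conditions (ii)--(v) be read uniformly as conditions on the single tensor $h$. The argument then has the shape of a star centred at (i): the equivalence (i)$\,\Leftrightarrow\,$(iv) is the Chaki-type theorem (Theorem~4.2), (i)$\,\Leftrightarrow\,$(ii) its parallel specialization (Corollary~4.1), (i)$\,\Leftrightarrow\,$(v) the Deszcz-type theorem (Theorem~4.3), and (i)$\,\Leftrightarrow\,$(iii) its semiparallel specialization (Corollary~4.2). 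Chaining these through the common vertex (i) yields the mutual equivalence of (i)--(v). It is worth remarking \emph{why} the parallel and semiparallel cases carry no side condition: ``parallel with respect to the semisymmetric metric connection'' is the case $\alpha\equiv 0$ of (iv), and since $0\neq-2$ the hypothesis $\alpha(\xi)\neq-2$ of Theorem~4.2 is automatic; likewise ``semiparallel'' is the case $L_1=0$ of (v), and $0\neq 2$, so $L_1\neq 2$ holds for free. Concretely, these substitutions turn the key identities $[\alpha(\xi)+2]h(X,Y)=0$ and $(L_1-2)h(Y,Z)=0$ from the proofs of Theorems~4.2 and~4.3 into $2h(X,Y)=0$ and $-2h(Y,Z)=0$, which force $h\equiv 0$.

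The only asymmetric step, and hence the one to keep an eye on, is (i)$\,\Rightarrow\,$(ii),(iii),(iv),(v): if $M$ is totally geodesic then $h\equiv 0$, and every defining relation for pseudo parallelism in the sense of Chaki (for arbitrary $\alpha$) or of Deszcz (for arbitrary $L_1$), together with their parallel and semiparallel specializations, holds trivially because each term is built from $h=\widetilde{h}$. Since this converse direction was already disposed of as trivial inside the individual theorems, the proof here amounts to citing those four statements in the order indicated above, and I anticipate no genuine obstacle beyond the bookkeeping of which hypothesis feeds which implication.
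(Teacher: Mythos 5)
Your proposal is correct and follows the paper's route exactly: the paper also obtains this theorem by simply chaining the previously proved results (Theorem on Chaki-type pseudo parallelism, the Deszcz-type theorem, and the two corollaries on the parallel and semiparallel cases) through the common condition that $M$ is totally geodesic, with the second fundamental forms identified via $\widetilde{h}=h$. Your additional remarks on why the specializations $\alpha\equiv 0$ and $L_1=0$ automatically satisfy the side conditions, yielding $2h=0$ and $-2h=0$, are consistent with how the paper derives its corollaries.
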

\section{conclusion}
In this paper pseudo parallel (in the sense of Chaki and in the sense of Deszcz) contact CR-submanifolds of Kenmotsu
manifolds are studied. It is known that pseudo Ricci symmetric manifolds or pseudo parallel manifolds in the sense of Chaki and Ricci
pseudosymmetric manifolds or pseudo parallel manifolds in the sense of Deszcz
are different. However, it is proved that pseudo parallel (in the sense of Chaki) contact CR-submanifolds and pseudo parallel (in the sense of Deszcz)
contact CR-submanifolds of Kenmotsu manifolds are equivalent with a certain condition. Also it is shown that pseudo parallel (in the sense of Chaki)
contact CR-submanifolds with respect to semisymmetric metric connection and pseudo parallel (in the sense of Deszcz) contact CR-submanifolds with respect
to semisymmetric metric connection of Kenmotsu manifolds with respect to semisymmetric metric connection are equivalent with a certain condition.\\

\noindent{\bf Acknowledgement:} The first author (S. K. Hui)  gratefully acknowledges to
the SERB (Project No.: EMR/2015/002302), Govt. of India for financial assistance of the work.

\vspace{0.1in}
\noindent Shyamal Kumar Hui and Pradip Mandal\\
Department of Mathematics\\
The University of Burdwan \\
Burdwan, 713104\\
 West Bengal, India\\
E-mail: skhui@math.buruniv.ac.in\\
E-mail:pradip2621994@rediffmail.com

\end{document}